\newtheorem{theorem}{Theorem}
\newtheorem{lemma}[theorem]{Lemma}
\newtheorem{proposition}[theorem]{Proposition}
\newtheorem*{theorem*}{Theorem}
\def\P{\mathrm{P}}
\def\E{\mathrm{E}}
\begin{document}

\begin{frontmatter}

%% Title, authors and addresses

%% use the tnoteref command within \title for footnotes;
%% use the tnotetext command for theassociated footnote;
%% use the fnref command within \author or \address for footnotes;
%% use the fntext command for theassociated footnote;
%% use the corref command within \author for corresponding author footnotes;
%% use the cortext command for theassociated footnote;
%% use the ead command for the email address,
%% and the form \ead[url] for the home page:
%% \title{Title\tnoteref{label1}}
%% \tnotetext[label1]{}
%% \author{Name\corref{cor1}\fnref{label2}}
%% \ead{email address}
%% \ead[url]{home page}
%% \fntext[label2]{}
%% \cortext[cor1]{}
%% \address{Address\fnref{label3}}
%% \fntext[label3]{}

\title{Lowest priority waiting time distribution in an accumulating priority L\'{e}vy queue}

%% use optional labels to link authors explicitly to addresses:
%% \author[label1,label2]{}
%% \address[label1]{}
%% \address[label2]{}

\author[label1]{Offer Kella}%\fnref{label3}}
\ead{offer.kella@huji.ac.il}
\author[label2]{Liron Ravner\corref{cor1}}
\ead{lravner@post.tau.ac.il}
\address[label1]{Department of Statistics, The Hebrew University of Jerusalem, Israel}
\address[label2]{Department of Statistics and Operations Research, Tel Aviv University, Israel}
\cortext[cor1]{Corresponding author.}
%\fntext[label2]{Supported in part by grant 1462/13 from the Israel Science Foundation and the Vigevani Chair in Statistics.}

\begin{abstract}
We derive the waiting time distribution of the lowest class in an accumulating priority (AP) queue with positive L\'{e}vy input. The priority of an infinitesimal customer (particle) is a function of their class and waiting time in the system, and the particles with the highest AP are the next to be processed. To this end we introduce a new method that relies on the construction of a workload overtaking process and solving a first-passage problem using an appropriate stopping time.
\end{abstract}

\begin{keyword}
Priority Queues \sep Accumulating Priority \sep Lévy Driven Queues \sep
Lévy Processes \sep First Passage Time
\end{keyword}

\end{frontmatter}

%%%%%%%%%%%%%%%%%%%%%%%%%%%%%%%%%%%%%%%%%%
\section{Introduction}\label{sec:intro}

Suppose that a single server provides service to non-atomic (infinitesimal) customers, that we refer to as particles, of different types according to a dynamic non-preemptive accumulating priority (AP) service regime. That is, the priority of every particle in the queue is a function of their class and their accumulated waiting time in the queue. Upon a service completion the server admits the particle with the highest accumulated priority. Such regimes are common in health-care applications where the condition of a patient can deteriorate while waiting (e.g. \cite{SSTZ2014}). The analysis of the accumulating priority M/G/1 queue goes back to \cite{K1964}. Note that it was then called the delay-dependent priority regime, but this has come to mean different things over the years and so we opt to use the accumulating-priority terminology of \cite{STZ2014}. This paper analyses the waiting time distribution of the particle class with the lowest AP rate in the general setting of a L\'{e}vy driven queue with positive input (see \cite{DM2012}) by means of a novel method. In this setting particles may arrive as a continuous flow or in batches with a whole mass of other particles. Thus, this formulation can also be useful for inventory or insurance models that include heterogeneous input and priorities. The method introduced here relies on a martingale representation of the composite workload accumulation process brought on by customer overtaking, and the subsequent solution of a first-passage time problem.

The M/G/1 queue with a linear AP regime has been extensively studied. The expected waiting times are known to satisfy a recursive formula, (3.47) on p. 131 of \cite{book_K1976}, which we will refer to as the Kleinrock formula. The initial condition for the recursion is the expected waiting time of the lowest priority classes, which can be computed on its own. The mean-value analysis was extended to other AP functions (without closed form solutions such as the Kleinrock formula): e.g. power law \cite{KF1967}, affine \cite{G1977}, and concave \cite{NA1979}. In \cite{STZ2014} the distributions of the waiting times for all customer classes were derived as a system of recursive equations for the LST of the different classes. This was done by the construction of an auxiliary process, the maximum-priority process, and the derivation of its LST. The distributional analysis was extended to a multi-server model in \cite{SSTZ2014} and \cite{LS2016} (the latter included heterogeneous servers), with the additional assumption that service times are exponential. The method of \cite{STZ2014} was also shown to be useful for certain non-linear accumulating functions in \cite{L2015}, and for a preemptive piriority regime in \cite{FD2015}. In \cite{SB2013} the lowest priority waiting time distribution was derived for a dynamic priority model in which customers jump to a higher priority class after waiting for a certain time threshold. Economic analysis of this model where customers can purchase their AP rates appeared in \cite{HR2016}.

Our analysis is the first to adress the AP queue in the general case of a L\'{e}vy driven queue, i.e.\ the workload arrival process is a positive L\'{e}vy process and not necessarily a compound Poisson process as in the regular M/G/1. This means that the incoming workload can be any subordinator, that is, a non-decreasing L\'{e}vy process. To acheive this we suggest an alternative approach to derive the distribution of the waiting time of the lowest priority customer class. An up to date review of the research of L\'{e}vy driven queues can be found in \cite{DM2012} and in greater detail in \cite{book_DM2015}. Our method involves constructing an accumulative-overtaking process and a stopping time with respect to it that is distributed as the total waiting time. A Wald-type martingale is then used to derive the LST of the waiting time.

In the sequel we establish a decomposition result for the stationary distribution of the workload that will be served before an arriving particle. Such a particle may arrive as part of a batch of particles and thus it will face more workload then just the amount that was present a moment before the batch arrived. This distinction will be important when analysing the overtaking process brought on by the accumulating priority regime in the following sections.

%%%%%%%%%%%%%%%%%%%%%%%%%%%%%%%%%%%%%%%%%%
\section{Preliminary technicalities}
This somewhat more technical section is needed to justify some of the analysis that appears later. Let $J_0=\{J_0(t)|t\ge0\}$ be a subordinator, i.e.\ non-decreasing right continuous L\'evy process (see p. 71 of \cite{book_B1996}), with respect to some filtration $\{\mathcal{F}_t|t\ge 0\}$ satisfying the usual conditions (right continuous, augmented). This means that $J_0(t)\in\mathcal{F}_t$ for all $t\ge 0$ and that $J_0(t+s)-J_0(t)$ is independent of $\mathcal{F}_t$ for all $s,t\ge 0$. In addition, we assume that $\rho_0\equiv EJ_0(1)<\infty$.

As in \cite{KY2013}, we recall that there are a constant $c_0\ge 0$, a L\'evy measure $\nu_0$ satisfying $\int_{(0,\infty)}x\nu_0(dx)<\infty$, and a Poisson random measure $N_0$ on $(0,\infty)\times[0,\infty)$ with mean measure $\nu_0\otimes\ell$, where $\ell$ is Lebesgue measure, such that
\begin{equation*}
J_0(t)=c_0 t+\int_{[0,t]\times(0,\infty)}xN_0(dx,ds)\ .
\end{equation*}
Also, as in (23), (25) and Lemma~1 of \cite{KY2013} it follows that if $W$ is some non-negative c\`adl\`ag adapted process and $F$ is a continuous function for which $\int_{(0,\infty)}(F(w+x)-F(w))^2\nu_0(dx)$ is bounded (in $w$) on $[0,\infty)$, then
\begin{align*}
M(t)=\int_0^t\int_{(0,\infty)}F(W(s-)+x)-F(W(s-))N_0(dx,ds)\nonumber\\
-\int_0^t\int_{(0,\infty)}F(W(s)+x)-F(W(s)))\nu_0(dx)ds
\end{align*}
is a zero mean $L^2$ martingale satisfying $M(t)/t\to0$ both a.s. and in $L^2$. In particular, if $F$ is differentiable with bounded derivative $f$ and we denote by $Y_e$ an independent (of all other processes) random variable with
\begin{equation*}
P(Y_e\le t)=\frac{c_0+\int_0^t\nu_0(y,\infty)dy}{\rho_0}\ ,
\end{equation*}
(see (4.6) of \cite{K1998})
then it follows with $g(w)=Ef(w+Y_e)$ that
\begin{align*}
c_0 f(W(s))&+\int_{(0,\infty)}(F(W(s)+x)-F(W(s))\nu_0(dx)\nonumber\\
&=c_0 f(W(s))+\int_{(0,\infty)}\int_0^xf(W(s)+y)dy\nu_0(dx)\\
&=c_0 f(W(s))+\int_0^\infty f(W(s)+y)\nu_0(y,\infty)dy\\
&=\rho_0 g(W(s))\ .\nonumber
\end{align*}
With all of the above we have, since $J_0(t)/t\to \rho_0$ a.s., that
\begin{equation}\label{eq:cumulative}
\begin{split}
&\frac{1}{J_0(t)}\int_0^t\frac{1}{\Delta J_0(s)}\int_0^{\Delta J_0(s)}f(W(s-)+x)dxdJ_0(s)\\
&-\frac{1}{t}\int_0^tg(W(s))ds
\end{split}
\end{equation}
converges almost surely and in $L^2$ to zero, where for the case $\Delta J_0(s)=0$ we define by convention
\begin{equation*}
\frac{1}{\Delta J_0(s)}\int_0^{\Delta J_0(s)}f(W(s-)+x)dx\equiv f(W(s-))\ .
\end{equation*} 
Now, note that
\begin{align*}
&\int_0^t\frac{1}{\Delta J_0(s)}\int_0^{\Delta J_0(s)}f(W(s-)+x)dxdJ_0(s)\nonumber\\
&=
c_0 \int_0^tf(W(s))ds+\sum_{0<s\le t}\int_0^{\Delta J_0(s)}f(W(s-)+x)dx\ ,
\end{align*}
and observe that if $W(\cdot)$ is some content that is found at time $t$, then the right hand side aggregates the function values of the content in front of all the $J_0(\cdot)$ particles that arrived by time $t$. That is, if a particle arrives on its own, then the content in front of it is just $W(s-)=W(s)$. If it is in some $x\ge 0$ location in a batch (=jump) then the amount is $W(s-)+x$. If we divide the right hand side by $J_0(t)$, then we have the average function value of the content in front of an arriving particle until time $t$.

From the fact that (\ref{eq:cumulative}) vanishes almost surely, it follows that if $W(s)$ has an ergodic distribution of some random variable $W$, that is 
\begin{equation*}
\frac{1}{t}\int_0^t g(W(s))ds\to Eg(W)=Ef(W+Y_e)\ ,
\end{equation*} 
then the long run average distribution of the content in front of particles will be distributed like $W+Y_e$ where $W,Y_e$ are independent. This is some generalised form of the well known PASTA (Poisson Arrivals See Time Averages) property for Poisson processes. See also Remark~2 of \cite{KB2013}.

We note that with
\begin{equation*}
\eta_0(\alpha)\equiv -\log\E e^{-\alpha J_0(1)}=c_0 \alpha+\int_{(0,\infty)}(1-e^{-\alpha x})\nu_0(dx)
\end{equation*}
we have, as in (4.8) of \cite{K1998}, that
\begin{equation}\label{eq:Ye_LJ_0T}
Ee^{-\alpha Y_e}=\frac{\eta_0(\alpha)}{\rho_0\alpha}\ .
\end{equation}

We conclude that if the process we are dealing with is regenerative with finite mean, nonarithmetic regeneration epochs, then the ergodic, stationary and limiting distributions all coincide. This will be the situation in the sequel.

All one needs to remember from this section is that under the assumptions that will soon appear, the limiting=stationary=ergodic distribution of the content in front of an arriving particle is that of $W+Y_e$ where $W,Y_e$ are independent, $W$ has the steady state distribution of the total content in the system while $Y_e$ has the excess distribution associated with the particular stream of particles (lowest priority) we are interested in.

%%%%%%%%%%%%%%%%%%%%%%%%%%%%%%%%%%%%%%%%%%
\section{L\'{e}vy driven AP queue}\label{sec:model}
A single server processes the workload of $N$ types of particles at a constant rate of $r$ per unit of time. The AP rates of the particle classes are ordered: $b_1<b_2<\cdots<b_N$. Thus, the AP at time $t$ of a type $i$ particle that arrived at time $s$, $s \leq t$, is $b_i(t-s)$. 
The priority dynamics are illustrated in Figure \ref{fig:AP_example} for a two-class example.

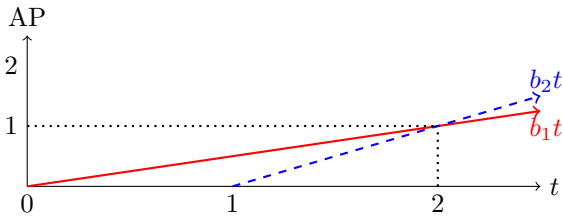
\begin{figure}[H]
\centering
\begin{tikzpicture}[xscale=2.7,yscale=0.8]
  \def\xmin{0}
  \def\xmax{2.5}
  \def\ymin{0}
  \def\ymax{2.5}
    \draw[->] (\xmin,\ymin) -- (\xmax,\ymin) node[right] {$t$} ;
    \draw[->] (\xmin,\ymin) -- (\xmin,\ymax) node[above] {AP} ;
    \foreach \x in {0,1,2}
    \node at (\x,\ymin) [below] {\x};
    \foreach \y in {1,2}
    \node at (\xmin,\y) [left] {\y};

    \draw[->,red, thick,domain=0:2.5]  plot (\x, {0.5*\x});
    \draw[->,blue,dashed, thick,domain=1:2.5]  plot (\x, {\x-1});
    \draw[smooth,black, thick,dotted]  (2,0) -- (2,1);
    \draw[smooth,black, thick,dotted]  (0,1) -- (2,1);

	\draw[] (2.4,0.95) node[right,red] {$b_1 t$};
	\draw[] (2.4,1.75) node[right,blue] {$b_2 t$};
	
\end{tikzpicture}
\caption{Example of the priority evolution for a type $1$ particle arriving at $t=0$ (solid red), and a type $2$ particle arriving at $t=1$ (dashed blue), with coefficients $b_1=0.5$ and $b_2=1$, respectively. At any time $t<2$ the type $1$ particle has higher priority than the type $2$ particle, but from $t=2$ and on he has lower priority and will effectively be overtaken.}\label{fig:AP_example}
\end{figure}

Suppose that $\{J_i(t),t\geq 0\}$, $i=1,\ldots,N$, are independent subordinators: almost surely monotone non-decreasing L\'{e}vy process. Denote by $\{\mathcal{F}_t\}_{t\geq 0}$ the natural filtration induced by $\{J_i:i=1,\ldots,N\}$. The total arrival process is then a subordinator as well,
\begin{equation*}
J(t)=\sum_{i=1}^N J_i(t).
\end{equation*}
Let $\rho_i=\E J_i(1)$, for all $i=1\ldots,N$ and assume, without loss of generality, that the workload is removed from the system (when it is not empty) at a linear rate of $r=1$. If $\rho=\sum_{i=1}^N\rho_i<1$ then the total workload is the reflection of a spectrally positive L\'{e}vy process with negative mean. Thus, the stationary distribution of the total workload satisfies the generalized PK formula (e.g. \cite{K2012}):
\begin{equation}\label{eq:GPK}
\E e^{-\alpha W_0}=\frac{\varphi'(0)\alpha}{\varphi(\alpha)}\ ,
\end{equation}
where
\begin{equation}\label{eq:J_total_LST}
\varphi(\alpha)=\alpha-\sum_{i=1}^N\eta_i(\alpha)\ ,
\end{equation}
and, in general,
\begin{equation*}
\eta_i(\alpha)=-\log Ee^{-\alpha J_i(1)}=c_i\alpha+\int_{(0,\infty)}(1-e^{-\alpha x})\nu_i(dx)\ .
\end{equation*}
$\nu_i$ is the L\'{e}vy measure satisfying $\int_{(0,\infty)}(x\wedge 1)\nu_i(dx)<\infty$. In fact, by the assumption that $\rho<1$ we actually have that $\int_{(0,\infty)}x\nu_i(dx)=\rho_i-c_i<\infty$.

%%%%%%%%%%%%%%%%%%%%%%%%%%%%%%%%%%%%%%%%%%
\subsection{Lowest priority waiting time}\label{sec:overtaking}
For now we assume that $N=2$ and in the sequel we will show that the analysis for the lowest priority class can be extended in a straightforward manner to allow for any number of overtaking classes. Upon arrival to the queue, a type 2, i.e.\ low-priority, particle will have to wait for all work present in front of it to be completed, as well as for some work that will arrive after it.

From the point of view of type 2 particles, suppose that at the time of arrival such a particle finds a workload level of $W_0+Y_e=v$ in front of it from all priorities as well as those in front of it in a possible batch. All of this work will have to be served before this particle is processed. Recall that the cumulative priority 1 workload arriving during the first $t$ time units after our tagged particle arrives is $J_1(t)$, a subordinator such that $\E J_1(t)=\rho_1 t$, where $\rho_1<1$. At time $v$, the priority of the tagged particle will be $b_2v$, while for a type 1 particle arriving at time $0\le t\le v$ it will be $b_1(v-t)$. Thus for every $t$ such that $b_2v>b_1(v-t)$, the arriving type 1 workload will not overtake our tagged particle while for every $t$ such that $b_2v\le b_1(v-t)$ it will. This means that if we denote $a=1-b_2/b_1$, then the workload that arrives by time $av$ will surely preempt our tagged particle. Hence the waiting time of our tagged particle will surely be $v+J_1(av)$ instead of just $v$. Denote the decelerated input process by $J_a(t)=J_1(at)$.

Let us repeat the above argument once more to make what follows clearer. Since we know for certain that the waiting time is at least $v+J_a(v)$, then for the same reason, all workload arriving between $av$ and $a(v+J_a(v))$ will also preempt our tagged particle and thus the waiting time of the first particle will be at least
\begin{equation*}
v+J_a(v)+J_a(v+J_a(v))-J_a(v)=v+J_a(v+J_a(v))\ .
\end{equation*}

Repeating this argument, if we denote $T_0=v$ then we can define an iterative total overtaking workload process as a series of random variables,
\begin{equation}\label{eq:Tn_J}
T_{n+1}=v+J_a(T_n),\quad n\geq 1\ .
\end{equation}

%%%
\begin{lemma}\label{lemma:T_n}
$\{T_n:n\geq 0\}$ is a non-decreasing sequence of stopping times with respect to $\{\mathcal{F}_t\}_{t\geq 0}$.
\end{lemma}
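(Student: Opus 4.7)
The plan is to proceed by induction on $n$, first establishing the monotonicity of the sequence and then leveraging it to prove the stopping-time property. Monotonicity is the easier part, but it will do essential work in the measurability argument that follows.

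For monotonicity, I would start with $T_1 = v + J_a(v) \geq v = T_0$, which holds because the subordinator $J_a$ is non-negative. For the inductive step, if $T_{n-1} \leq T_n$, then the monotone paths of $J_a$ give $J_a(T_{n-1}) \leq J_a(T_n)$, and adding $v$ yields $T_n \leq T_{n+1}$.

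For the stopping-time property, $T_0 = v$ is deterministic and hence trivially a stopping time. Assuming inductively that $T_n$ is a stopping time, my goal would be to show $\{T_{n+1} \leq t\} \in \mathcal{F}_t$ for every $t \geq 0$. The trick is to exploit the monotonicity already obtained: since $T_{n+1} \geq T_n$, we have $\{T_{n+1} \leq t\} \subseteq \{T_n \leq t\}$, and so $\{T_{n+1} \leq t\} = \{T_n \leq t\} \cap \{v + J_a(T_n \wedge t) \leq t\}$, because $T_n \wedge t = T_n$ on the first event. Now $\{T_n \leq t\} \in \mathcal{F}_t$ by the inductive hypothesis, while $T_n \wedge t$ is $\mathcal{F}_t$-measurable (standard for stopping times), and $J_a$ is adapted and right-continuous (adaptedness follows from $a \in (0,1)$, which gives $J_1(as) \in \mathcal{F}_{as} \subseteq \mathcal{F}_s$), so $J_a(T_n \wedge t) \in \mathcal{F}_t$ by the usual progressive-measurability argument. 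The intersection therefore lies in $\mathcal{F}_t$.

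The main, though mild, technical point is that $T_{n+1}$ is defined by composing $J_a$ with the random time $T_n$, and a priori $J_a(T_n)$ only sits in $\mathcal{F}_{T_n}$, not $\mathcal{F}_t$. Replacing $T_n$ by $T_n \wedge t$ on the event where they coincide converts the composition into evaluation at a bounded stopping time, and it is this rewriting---legitimised by the monotonicity---that makes the $\mathcal{F}_t$-measurability transparent.
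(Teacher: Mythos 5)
Your proposal is correct and follows essentially the same route as the paper: monotonicity by induction from the monotone paths of $J_a$, and then the stopping-time property via the identity $\{T_{n+1}\leq t\}=\{T_{n+1}\leq t\}\cap\{T_n\leq t\}$ made available by that monotonicity. The only difference is that you spell out, through $T_n\wedge t$ and progressive measurability, the measurability fact the paper compresses into the assertion $T_{n+1}=v+J_a(T_n)\in\mathcal{F}_{T_n}$, which is a welcome but not substantively different elaboration.
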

\begin{proof}
Since $T_1=v+J_a(v)\ge v=T_0$, then by induction it follows that
\begin{equation*}
T_{n+1}=v+J_a(T_n)\ge v+J_a(T_{n-1})=T_n, \quad \forall n\geq 1\ .
\end{equation*}
Clearly, $T_0=v$ is a stopping time. If $T_n$ is a stopping time then recalling that $A\in\mathcal{F}_{T_n}$ if and only if $A\in\mathcal{F}$, $A\cap\{T_n\leq t\}\in\mathcal{F}_t$ $\forall t\geq 0$, we have that
\begin{equation*}
T_{n+1}=v+J_a(T_{n})\in\mathcal{F}_{T_{n}}\ ,
\end{equation*}
hence, since $T_n\leq T_{n+1}$ it follows that
\begin{equation*}
\{T_{n+1}\leq t\}=\{T_{n+1}\leq t\}\cap\{T_{n}\leq t\}\in\mathcal{F}_t, \quad t\geq 0\ .
\end{equation*}
\end{proof}

If we take $T=\lim_{n\to\infty}T_n$ then, since by Lemma \ref{lemma:T_n},
\begin{equation*}
\E T_n=v+\E J_a(T_{n-1})=v+\rho_1a\E T_{n-1}\ ,
\end{equation*}
then
\begin{equation*}
\E T_n=(\rho_1 a)^n+\frac{1-(\rho_1a)^n}{1-\rho_1a}v\ ,
\end{equation*}
and thus
\begin{equation}\label{eq:mean_T}
\E T=\frac{v}{1-\rho_1a}\ .
\end{equation}

Observe that if $J_1(t)$ is compound Poisson then taking expectation of \eqref{eq:mean_T} with respect to the workload upon arrival $W_0$, yields Kleinrock's expected waiting time formula for the lowest priority class (see (3.47) in \cite{book_K1976}),
\begin{equation*}
\E W_2=\E[\E(T|W_0)]=\frac{\frac{\rho\E X_e}{1-\rho}}{1-\rho_1\left(1-\frac{b_2}{b_1}\right)}\ ,
\end{equation*}
where $X_e$ is the residual service time.

From \eqref{eq:mean_T} we conclude that $T<\infty$ a.s. for any initial $v\geq 0$. Furthermore, by Proposition~7 (p. 21) of \cite{book_B1996} we have that $J_a(T_n)\to J_a(T)$ a.s. and hence $v+J_a(T)=T$ a.s. Moreover, if $T_{n-1}<T_n$ then for $T_{n-1}\le t<T_n$ we have that
\begin{equation*}
v+J_a(t)\ge v+J_a(T_{n-1})=T_n>t\ ,
\end{equation*}
so that in particular $v+J_a(t)>t$ for all $0\le t<T$. Therefore, $T$, which is itself a stopping time, is a.s. equal to
\begin{equation*}
\inf\{t\ge 0|v+J_a(t)-t=0\}\ .
\end{equation*}

Thus, as is well known (e.g. p. 81 of \cite{book_K2006}),
\begin{equation*}
\E e^{-\alpha T}=e^{-\varphi_a^{-1}(\alpha)v}\ ,
\end{equation*}
where $\varphi_a(\alpha)=\log \E e^{-\alpha(J_a(1)-1)}=\alpha-a\eta_1(\alpha)$.

All type 1 workload that arrives by time $aT$ will be served before our tagged particle and all type 1 workload that arrives after $aT$ will be served after. In particular, if instead of $v$ we insert the total workload of $W_0+Y_e$, then applying the generalized P-K formula \eqref{eq:GPK} and the LST of $Y_e$ with respect to $J_2$, as defined in \eqref{eq:Ye_LJ_0T}, at $\varphi_a^{-1}(\alpha)$ yields the LST of the unconditional steady state waiting time of type $2$ particles. This brings us to the main result of this article.

%%%
\begin{theorem}\label{thm:2class}
The LST of the steady state waiting time of class $2$ particles is given by
\begin{equation*}\label{eq:W2_LST}
\E e^{-\alpha W_2}=\frac{\varphi'(0)\varphi_a^{-1}(\alpha)}{\varphi\left(\varphi_a^{-1}(\alpha)\right)}\cdot \frac{\eta_2(\varphi_a^{-1}(\alpha))}{\rho_2 \varphi_a^{-1}(\alpha)}=
\frac{\varphi'(0)\eta_2(\varphi_a^{-1}(\alpha))}{\rho_2\varphi\left(\varphi_a^{-1}(\alpha)\right)}\ ,
\end{equation*}
where
\begin{equation*}
\varphi_a(\alpha)=\log \E e^{-\alpha(J_a(1)-1)}\ ,
\end{equation*}
and
\begin{equation*}
\eta_2(\alpha)=-\log \E e^{-\alpha J_2(1)}\ .
\end{equation*}
\end{theorem}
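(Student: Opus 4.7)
The plan is to assemble the ingredients already developed in the run-up to the theorem. Conditional on the total workload $v = W_0 + Y_e$ that a tagged type-$2$ particle finds in front of it upon arrival, the overtaking argument established that its waiting time $W_2$ coincides a.s.\ with the stopping time
\[
T = \inf\{t \ge 0 : v + J_a(t) - t = 0\}\ ,
\]
so the first step is to invoke the standard first-passage identity for a spectrally positive L\'{e}vy process with negative drift to write
\[
\E[e^{-\alpha W_2} \mid v] = \E[e^{-\alpha T}\mid v] = e^{-\varphi_a^{-1}(\alpha) v}\ ,
\]
where $\varphi_a(\beta) = \beta - a\eta_1(\beta)$ is convex and increasing on $[0,\infty)$ (because $\rho_1 a < 1$), so that the right-inverse $\varphi_a^{-1}(\alpha)$ is well defined for $\alpha \ge 0$.

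Next, I would uncondition using the decomposition developed in the preliminary section: the workload in front of an arriving type-$2$ particle is distributed as $W_0 + Y_e$ with $W_0$ (the stationary total workload) independent of $Y_e$ (the excess random variable associated with the subordinator $J_2$). Independence then yields
\[
\E e^{-\alpha W_2} = \E e^{-\varphi_a^{-1}(\alpha) W_0} \cdot \E e^{-\varphi_a^{-1}(\alpha) Y_e}\ .
\]

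Finally, I would substitute the generalized Pollaczek--Khinchine formula \eqref{eq:GPK} for the first factor and the excess LST \eqref{eq:Ye_LJ_0T} (with the driving subordinator taken to be $J_2$) for the second factor, both evaluated at the argument $\varphi_a^{-1}(\alpha)$. A single cancellation of the factor $\varphi_a^{-1}(\alpha)$ between the two fractions produces the compact form stated in the theorem.

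The step that requires the most care, and which the discussion preceding the statement already addresses, is the first one: namely, that the iterative construction $T_n \uparrow T$ really does reconstruct the waiting time of the tagged particle, and that $T$ admits the first-passage characterization above. This rests on the monotonicity of $T_n$ established in Lemma~\ref{lemma:T_n} and the right-continuity of $J_a$. The remaining pieces --- independence of $W_0$ and $Y_e$ at the arrival epoch via the generalized PASTA-type relation, and the P-K formula --- are results already stated in the earlier sections, so the rest amounts to bookkeeping.
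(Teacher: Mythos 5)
Your proposal is correct and follows essentially the same route as the paper: conditioning on $v=W_0+Y_e$, identifying $W_2$ with the first-passage time $T=\inf\{t\ge 0: v+J_a(t)-t=0\}$ so that $\E[e^{-\alpha T}\mid v]=e^{-\varphi_a^{-1}(\alpha)v}$, and then unconditioning via the independence of $W_0$ and $Y_e$ together with the generalized P--K formula \eqref{eq:GPK} and the excess LST \eqref{eq:Ye_LJ_0T} for $J_2$. This is precisely the argument the paper gives in the passage leading up to the theorem.
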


For the M/G/1 queue with arrival rate $\lambda=\lambda_1+\lambda_2$ and traffic intensity $\rho=\lambda_1 \E X_1+\lambda_2 \E X_2$, the LST of the steady state customer (not particle) waiting time is given by the P-K formula
\begin{equation*}
\frac{1-\rho}{1-\rho\E e^{-\alpha X_e}}\ ,
\end{equation*}
hence in this case the waiting time of the first type-2 particle in a batch, which is the waiting time of an arriving type-2 customer, is given by
\begin{equation}\label{eq:LST_MG1_W2}
\E e^{-\alpha W_2}=\frac{1-\rho}{1-\rho \E e^{-\varphi_a^{-1}(\alpha)X_e}}\ ,
\end{equation}
where
\begin{align*}
\varphi_a(\alpha) &= \log \E e^{-\alpha(J_a(1)-1)}=\alpha-\lambda_1a(1-\E e^{-\alpha X})\\
&= \alpha(1-\rho_1a\E e^{-\alpha X_e})\ .
\end{align*}
By applying some algebra it can be verified that \eqref{eq:LST_MG1_W2} coincides with the lowest priority class LST established in (31) of \cite{STZ2014} for the compound Poisson input case, which used a different construction.

If for some $n$ we have that $T_n=T_{n-1}$ then also $T_m=T_{n-1}$ for all $m\ge n-1$, so that $T=T_{n-1}=T_n$. We next assert that the number of such overtaking increments is finite almost surely if and only if the input process of type $1$ particles is Compound Poisson. The proof is given in the appendix.
%%%
\begin{proposition}\label{prop:finite_n}
Let $K=\inf\{n\geq 0:T_n=T_{n-1}\}$. For any $v>0$, if $\{J_1(t):t\geq 0\}$ is a compound Poisson process then $\P(K<\infty)=1$, otherwise $\P(K=\infty)=1$.
\end{proposition}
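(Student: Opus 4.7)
The plan is to split the analysis into two disjoint cases according to the structure of the subordinator $J_1$: (i) $J_1$ is compound Poisson, meaning $c_1=0$ and $\nu_1((0,\infty))<\infty$; (ii) $J_1$ is not compound Poisson, so either $c_1>0$ or $\nu_1((0,\infty))=\infty$.

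For case (i), the process $J_a(t)=J_1(at)$ inherits the compound Poisson property and thus has only finitely many jumps in any bounded interval. Since $T<\infty$ almost surely (as already established in the text), the number $m$ of jumps of $J_a$ on $[0,T]$ is almost surely finite. I would label these jumps in increasing order as $(\tau_i,X_i)_{i=1}^{m}$ and set $\sigma_k=v+\sum_{i=1}^{k}X_i$, with $\sigma_0=v$. A one-line induction then gives $T_n=\sigma_{m_n}$, where $m_n=|\{i:\tau_i\le T_{n-1}\}|$ is the number of jumps of $J_a$ up to time $T_{n-1}$. The sequence $(m_n)$ is non-decreasing in $n$ and bounded above by $m$, so it stabilizes at some $m^{\ast}\le m$ after finitely many steps. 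At that point $v+J_a(\sigma_{m^{\ast}})-\sigma_{m^{\ast}}=0$, so $\sigma_{m^{\ast}}$ is a zero of $t\mapsto v+J_a(t)-t$; combined with $T_n\le T$ and the minimality of $T$ (the text notes that $v+J_a(t)>t$ for all $0\le t<T$), this forces $\sigma_{m^{\ast}}=T$, and hence $K\le m+1<\infty$ almost surely.

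For case (ii), I would prove by induction on $n$ that $T_n>T_{n-1}$ almost surely, which immediately yields $K=\infty$ almost surely. The base case $T_1>T_0$ follows from the standard fact that a subordinator which is not compound Poisson satisfies $\P(J_a(t)>0)=1$ for every deterministic $t>0$: either the positive drift $ac_1>0$ forces this pathwise, or the infinite activity $\nu_1((0,\infty))=\infty$ produces countably many positive jumps in $(0,t]$. For the inductive step, suppose $T_n>T_{n-1}$ a.s. Because $T_{n-1}$ is a stopping time (Lemma~\ref{lemma:T_n}) and $T_n-T_{n-1}=J_a(T_{n-1})-J_a(T_{n-2})$ is $\mathcal{F}_{T_{n-1}}$-measurable, the strong Markov property applied at $T_{n-1}$ gives
\begin{equation*}
T_{n+1}-T_n=J_a(T_n)-J_a(T_{n-1})=\widetilde{J}_a(T_n-T_{n-1})\ ,
\end{equation*}
where $\widetilde{J}_a(u):=J_a(T_{n-1}+u)-J_a(T_{n-1})$ is distributed as $J_a$ and independent of $\mathcal{F}_{T_{n-1}}$. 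Conditioning on $\mathcal{F}_{T_{n-1}}$ and applying the positivity fact at the realised value $s=T_n-T_{n-1}>0$ closes the induction.

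The main technical obstacle lies in case (ii): one must justify invoking the strong Markov property at the random stopping time $T_{n-1}$ and then transfer the deterministic-time statement $\P(\widetilde{J}_a(s)>0)=1$ to the $\mathcal{F}_{T_{n-1}}$-measurable random time $s=T_n-T_{n-1}$ via conditioning. Case (i) is essentially bookkeeping once the finiteness of $T$ and of the jump count on $[0,T]$ has been exploited.
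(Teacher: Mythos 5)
Your proposal is correct, and the two halves relate to the paper's proof differently. For the non--compound-Poisson case you follow essentially the paper's route, but more carefully: the paper asserts tersely that the probability of zero jumps in an increment period is zero, whereas you make explicit the induction showing $T_n>T_{n-1}$ a.s.\ (needed so the increment interval is non-degenerate), you justify the step via the strong Markov property at $T_{n-1}$ together with the $\mathcal{F}_{T_{n-1}}$-measurability of $T_n-T_{n-1}$, and you cover the drift-only sub-case ($c_1>0$ with finite L\'evy measure), which the paper's ``infinitely many infinitesimal jumps'' phrasing does not literally address. For the compound Poisson case your argument is genuinely different: the paper proves $\sum_n\P(T_n-T_{n-1}>0)<\infty$ via iterated Laplace exponents $\eta_a(\eta_a(\cdots))$, a convexity bound, and Borel--Cantelli, while you give a purely pathwise bookkeeping argument — since $T<\infty$ a.s.\ (established earlier from $\E T<\infty$), $J_a$ has a.s.\ finitely many jumps $m$ on $[0,T]$, each $T_n$ equals $\sigma_{m_n}$ for a non-decreasing integer sequence $m_n\le m$, which must stabilize, yielding $K\le m+1$. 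Your route is more elementary, avoids the Laplace-transform computation entirely, and gives an explicit a.s.\ bound on $K$ in terms of the jump count; the paper's route yields quantitative geometric bounds on $\P(T_n>T_{n-1})$ as a by-product. Both are valid; your case (i) does lean on the a.s.\ finiteness of $T$, which the paper's Borel--Cantelli computation does not require directly.
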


%%%%%%%%%%%%%%%%%%%%%%%%%%%%%%%%%%%%%%%%%%
\subsection{Joint distributions}\label{sec:joint}
In a two-station tandem fluid queue with L\'evy input the joint distribution of the queue sizes $(Q_1,Q_2)$ can be derived, in part by using the fact that
\begin{equation*}
(Q_1,Q_2)\sim(Q-Q_2,Q_2)\ .
\end{equation*}
where $Q=Q_1+Q_2$ is the total system size (e.g. p. 174 of \cite{book_DM2015}). In our model however we do not have a clear relation between the waiting times $W_1$, $W_2$ and the total workload $W_0$, except that the expected values satisfy a work conserving equation (see p. 114 of \cite{book_K1976}).

A joint distribution that is straightforward to obtain, because we know the conditional distribution, is that of the initial workload faced by a low priority particle and his total waiting time, i.e.\ $(W_2,W_0+Y_e)$,
\begin{align*}
\E e^{-\alpha W_2-\beta(W_0+Y_e)} &= \E e^{-(\varphi_a^{-1}(\alpha)+\beta)(W_0+Y_e)}\\
&= \frac{\varphi'(0)\eta_2(\varphi_a^{-1}(\alpha)+\beta)}{\rho_2\varphi\left(\varphi_a^{-1}(\alpha)+\beta\right)}\ . 
\end{align*}

%%%%%%%%%%%%%%%%%%%%%%%%%%%%%%%%%%%%%%%%%%
\subsection{Lowest priority among multiple classes}\label{sec:multi}
Let us now consider the general model with $N$ particle classes with independent subordinator input processes $\{J_i(t):i=1,\ldots,N\}$, with $\rho_i=\E J_i(1)$. Recall that the total arrival process is then a subordinator as well, denoted by $J(t)=\sum_{i=1}^N J_i(t)$. If $\rho=\sum_{i=1}^N\rho_i<1$ then the total workload is the reflection of a spectrally positive L\'{e}vy process with negative mean. Thus, the distribution of workload upon arrival of any particle satisfies the generalized PK formula \eqref{eq:GPK} with $\varphi(\alpha)$ as defined in \eqref{eq:J_total_LST}.

Let $a_i=1-\frac{b_N}{b_i}$ for $i=1,\ldots,N-1$, where $b_1<b_2<\cdots<b_N$ are the AP rates of all classes. If a type $N$ particle arrives when there is a total workload of $v$ in the system then all workload of type $i<N$ that arrives during $[0,a_i v)$ will surely overtake him. Let
\begin{equation*}
J_a(t)=J_1(a_1 t)+J_2(a_2 t)+\ldots+J_{N-1}(a_{N-1} t)\ ,
\end{equation*}
be the total workload overtaking the tagged particle during an interval of length $t$. Repeating the arguments of the previous section we have that \eqref{eq:Tn_J} still holds: $T_n=v+J_a(T_{n-1})$ $\forall n\geq 1$ is a series of non-decreasing stopping times, where $T_0=v$, and
\begin{align*}
\E T_n &= v+\E J_a(T_{n-1})=v+\sum_{i=1}^{N-1}\E J_i(a_i T_{n-1})\\
&=v+\sum_{i=1}^{N-1}\rho_i a_i \E T_{n-1} \\
&=v\left(1+\sum_{i=1}^{N-1}\rho_i a_i\right)+\sum_{i=1}^{N-1}\rho_i a_i\sum_{j=1}^{N-1}\rho_j a_j \E T_{n-2} \\
&=v\left(1+\sum_{i=1}^{N-1}\rho_i a_i+\left[\sum_{i=1}^{N-1}\rho_i a_i\right]^{2}\right)\\
&+\left[\sum_{i=1}^{N-1}\rho_i a_i\right]^{2}\sum_{j=1}^{N-1}\rho_j a_j \E T_{n-3}  \\
\vdots & \\
&= v\frac{1-\left[\sum_{i=1}^{N-1}\rho_i a_i\right]^{n-1}}{1-\sum_{i=1}^{N-1}\rho_i a_i}+\left[\sum_{i=1}^{N-1}\rho_i a_i\right]^{n-1}\ .
\end{align*}
If $T=\lim_{n\to\infty}T_n$ then
\begin{equation*}
\E T=\frac{v}{1-\sum_{i=1}^{N-1}\rho_i a_i}<\infty \ ,
\end{equation*}
again coinciding with (3.47) in \cite{book_K1976} when taking expectation with respect to $W_0$.

As $J_i(a_i t)$ are all L\'{e}vy processes, with respect to the time change $a_i t$, then so is their superposition $J_a(t)$. We can therefore define
\begin{equation*}
\varphi_a(\alpha)=\log\E e^{-\alpha(J_a(1)-1)}=\alpha-\sum_{i=1}^{N-1}a_i\eta_i(\alpha)\ .
\end{equation*}
Repeating the arguments of the previous section we have that the waiting time of a type $N$ particle is distributed as $T=\inf\{t\geq 0:v+J_a(t)-t=0\}$, and we obtain $\E e^{-\alpha T}=e^{-\varphi_a^{-1}(\alpha)v}$. Finally, \eqref{eq:Ye_LJ_0T} for the input of type $N$ (with exponent function $\eta_N$) and \eqref{eq:GPK} for the total input (with exponent function $\varphi$) yield our general result.

%%%
\begin{theorem}\label{thm:LST_General}
The LST of the steady state waiting time of class $N$ particles is
\begin{equation*}\label{eq:LST_WN}
\E e^{-\alpha W_N}=
\frac{\varphi'(0)\eta_N(\varphi_a^{-1}(\alpha))}{\rho_N\varphi\left(\varphi_a^{-1}(\alpha)\right)}\ .
\end{equation*}
\end{theorem}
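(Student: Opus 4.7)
The plan is to follow the same route used for Theorem \ref{thm:2class}, simply replacing the single decelerated subordinator $J_a(t)=J_1(at)$ by the aggregated overtaking subordinator $J_a(t)=\sum_{i=1}^{N-1}J_i(a_it)$, and then to unconditioning via the generalized PASTA result of the preliminaries section.

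First I would verify that $J_a$ is itself a subordinator. Each $J_i(a_i\cdot)$ is a subordinator (time-change by a positive constant preserves the Lévy property and monotonicity), and independent subordinators add to a subordinator. Its Laplace exponent is exactly $\sum_{i=1}^{N-1}a_i\eta_i(\alpha)$, so $\varphi_a(\alpha)=\alpha-\sum_{i=1}^{N-1}a_i\eta_i(\alpha)$ is the Laplace exponent of the spectrally positive Lévy process $J_a(t)-t$, with $\varphi_a'(0)=1-\sum_{i=1}^{N-1}\rho_ia_i>0$ (the drift condition needed for the first passage argument).

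Next I would repeat, verbatim, the overtaking argument from Section \ref{sec:overtaking}: conditional on a tagged type-$N$ particle arriving and seeing workload $v=W_0+Y_e$ in front of it, for each $i<N$ the type-$i$ work arriving in $[0,a_iv)$ is guaranteed to preempt, so the waiting time is bounded below by $v+J_a(v)$; iterating produces the sequence $T_n=v+J_a(T_{n-1})$. The proof of Lemma \ref{lemma:T_n} goes through unchanged to show that $\{T_n\}$ is a non-decreasing sequence of stopping times with respect to $\{\mathcal{F}_t\}$. The computation of $\E T_n$ given just before the theorem shows $\E T=v/(1-\sum_{i=1}^{N-1}\rho_ia_i)<\infty$, so $T=\lim_nT_n<\infty$ almost surely. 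By right-continuity of $J_a$ (Proposition~7 of \cite{book_B1996}) we have $v+J_a(T)=T$, and the strict inequality $v+J_a(t)>t$ for all $t<T$ holds by the same interval argument as in the two-class case; therefore
\begin{equation*}
T=\inf\{t\ge 0\,:\,v+J_a(t)-t=0\}\ .
\end{equation*}

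This identifies $T$ as the first passage time of the spectrally positive Lévy process $t-J_a(t)$ above level $v$, so the classical formula (e.g.\ p.~81 of \cite{book_K2006}) yields $\E[e^{-\alpha T}\mid v]=e^{-\varphi_a^{-1}(\alpha)v}$. Finally, I would uncondition: by the preliminary section, an arriving type-$N$ particle sees a workload distributed as $W_0+Y_e$ with $W_0,Y_e$ independent, $W_0$ having LST \eqref{eq:GPK} and $Y_e$ the excess variable associated with $J_N$, whose LST is $\eta_N(\alpha)/(\rho_N\alpha)$ by \eqref{eq:Ye_LJ_0T}. Evaluating both LSTs at $\varphi_a^{-1}(\alpha)$ and multiplying gives
\begin{equation*}
\E e^{-\alpha W_N}=\frac{\varphi'(0)\varphi_a^{-1}(\alpha)}{\varphi(\varphi_a^{-1}(\alpha))}\cdot\frac{\eta_N(\varphi_a^{-1}(\alpha))}{\rho_N\varphi_a^{-1}(\alpha)}\ ,
\end{equation*}
and the $\varphi_a^{-1}(\alpha)$ factors cancel to give the stated formula.

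The only real obstacle is ensuring that $\varphi_a^{-1}$ is well defined on $[0,\infty)$, i.e.\ that $\varphi_a$ is strictly increasing with $\varphi_a(0)=0$ and $\varphi_a(\infty)=\infty$; this follows because $\sum_{i=1}^{N-1}\rho_ia_i\le\rho<1$, so the aggregated overtaking Lévy process has strictly negative mean drift, which is exactly the setting in which the first passage LST formula applies. Everything else is a verbatim repetition of the argument carried out for $N=2$.
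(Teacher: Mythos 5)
Your proposal is correct and follows essentially the same route as the paper: Section \ref{sec:multi} likewise aggregates the decelerated inputs into the subordinator $J_a(t)=\sum_{i=1}^{N-1}J_i(a_it)$, repeats the two-class overtaking/stopping-time argument to identify $T=\inf\{t\ge0: v+J_a(t)-t=0\}$ with $\E e^{-\alpha T}=e^{-\varphi_a^{-1}(\alpha)v}$, and then unconditions via \eqref{eq:GPK} and \eqref{eq:Ye_LJ_0T} for $J_N$. Your added check that $\varphi_a$ is a valid (negative-mean) exponent so that $\varphi_a^{-1}$ exists is a sensible explicit remark, but it is the same argument the paper relies on implicitly.
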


%%%%%%%%%%%%%%%%%%%%%%%%%%%%%%%%%%%%%%%%%%
\section{Concluding remarks}\label{sec:conclusion}
We have presented here a new method for deriving the previously unknown waiting time distribution of the lowest-priority particle class in an accumulating priority L\'{e}vy driven queue, which generalizes the known result for the M/G/1 queue. Constructing a similar method for the highest priority class, and ultimately for any priority class, is an open challenge. This will require the construction of a negative workload overtaking process that is removed from the initial amount of work upon arrival. To this end perhaps approximations are in order. 

The method presented here also applies to non-linear accumulation functions that satisfy certain conditions, similar to those that appeared in \cite{L2015} for the M/G/1 analysis. For a wider class of continuous functions the method can be applied partially: the first-passage time problem can be formulated for the overtaking process, but the time change of the input process is no longer linear and it is unclear if it is at all possible to analytically derive the LST of the waiting time.

\section*{Appendix}
\begin{proof}[Proof of Proposition \ref{prop:finite_n}]
If $J_1(t)$ is not a compound Poisson process then it has an infinite number of infinitesimal jumps during every finite interval. Therefore the probability of zero jumps during an increment period, i.e.
\begin{equation*}
\P(T_{n+1}-T_n=0)=\P(J_a(T_n)-J_a(T_{n-1})=0)\ ,
\end{equation*}
is zero for any $n\geq 1$. And thus $K=\infty$ a.s.

If $J_1(t)$ is a compound Poisson process with $\rho_1<1$ then we will show that
\begin{equation*}
\P \big(\{ T_n=T_{n-1}, \ \mathrm{i.o.}\}\big)=1\ .
\end{equation*}
This can be argued by showing that
\begin{equation*}
\sum_{n=1}^\infty \P(T_n-T_{n-1}>0)<\infty\ ,
\end{equation*}
and applying the Borel-Cantelli Lemma.

The probability of no overtaking in the first interval is given by
\begin{equation*}
\P(T_1-T_0=0)=\P(J_a(v)=0)=e^{-\lambda_1 av}\ .
\end{equation*}
For any $n\geq 1$ the additional workload at step $n$ is
\begin{equation*}
T_{n+1}-T_{n}=J_a(T_{n})-J_a(T_{n-1})\ ,
\end{equation*}
which requires more careful treatment as the increments are not independent of the input process. Nevertheless, we argue that
\begin{equation*}
J_a(T_{n})-J_a(T_{n-1})\sim \tilde{J}_a(T_{n-1}-T_{n-2}),\quad n\geq 1\ .
\end{equation*}
where $\tilde{J}_a$ is an independent copy of $J_a$. By the strong Markov property it is well known that for any a.s. finite stopping time $S$, $\mathcal{F}_S$ is independent of $\{Y(S+t)-Y(S):t\geq 0\}$, where $Y$ is a L\'evy process. Thus, if $U\in\mathcal{F}_S$ then $U-S\in\mathcal{F}_S$ as well, and $U-S$ is independent of the increments of $Y$ after $S$.

Indeed, $J_a$ is a process with independent increments, and by Lemma \ref{lemma:T_n} $T_n$ and $T_{n-1}$ are stopping times such that $T_n\in\mathcal{F}_{T_{n-1}}$. We conclude that $T_n-T_{n-1}$ is independent of $\{J_a(T_{n-1}+t)-J_a(T_{n-1}):t\geq 0\}$ for any $n\geq 1$, and therefore
\begin{align*}
\E e^{-\alpha(T_{n+1}-T_{n})} &= \E e^{-\alpha(J_a(T_{n-1}+T_n-T_{n-1})-J_a(T_{n-1}))}\\
&= \E e^{-\eta_a(\alpha)(J_a(T_{n}-T_{n-1}))}\ ,
\end{align*}
where $\eta_a(\alpha)=-\log\E e^{-\alpha J_a(1)}$. For example,
\begin{align*}
\E e^{-\alpha\big(T_2-T_1\big)} &= \E e^{-\alpha(J_a(T_{1}-T_{0}))}= \E e^{-\alpha(J_a(J_a(v)))}\\
&= \E e^{-\eta_a(\alpha)J_a(v)}=e^{-\eta_a(\eta_a(\alpha))v}\ .
\end{align*}
That is, if we condition on $J_a(v)$ then the additional workload in the second period is distributed as the Compund Poisson process $J_a$ on an interval of length $J_a(v)$. Using the convexity of $\eta_a$, this yields
\begin{align*}
\P(T_2-T_1=0) &= \E\big[\P(T_2-T_1=0|J_a(v))\big]\\
&= \E e^{-\lambda_1 aJ_a(v)}=e^{-\eta_a(\lambda_1 a)v}\geq e^{-\lambda_1 a^2\rho_1 v}\ .
\end{align*}
By iterating we have that
\begin{equation*}
\P(T_n-T_{n-1}=0)=e^{-\eta_a(\eta_a(\ldots\eta_a(\lambda_1 a)\ldots))}\geq e^{-\lambda_1 a(a\rho_1)^{n-1}v}\ .
\end{equation*}

Finally, this leads us to:
\begin{equation*}
\begin{split}
\sum_{n=1}^\infty \P(T_n-T_{n-1}>0)&=\sum_{n=1}^\infty \big(1-\P(T_n-T_{n-1}=0)\big) \\
&\leq \sum_{n=1}^\infty \big(1-e^{-\lambda_1 a(a\rho_1)^{n-1}v}\big) \\
&\leq \sum_{n=1}^\infty\lambda_1 a(a\rho_1)^{n-1}v=\frac{\lambda_1 a v}{1-a\rho_1}<\infty\ ,
\end{split}
\end{equation*}
from which it follows that $\P(\{T_n>T_{n-1}, \ \mathrm{i.o.}\})=0$.
\end{proof}

%%%%%%%%%%%%%%%%%%%%%%%%%%%%%%%%%%%%%%%%%%
\section*{Acknowledgements}
We wish to thank an anonymous referee whose helpful comments helped improve this paper. This work was supported in part by grant 1462/13 from the Israel Science Foundation and the Vigevani Chair in Statistics.

%%%%%%%%%%%%%%%%%%%%%%%%%%%%%%%%%%%%%%%%%%
\section*{References}
\bibliography{BigBib}

\begin{thebibliography}{10}

\bibitem{book_B1996}
J.~Bertoin.
\newblock {\em L{\'e}vy processes}, volume 121.
\newblock Cambridge university press, 1996.

\bibitem{DM2012}
K.~D\k{e}bicki and M.~Mandjes.
\newblock L{\'e}vy-driven queues.
\newblock {\em Surveys in Operations Research and Management Science}, 17(1):15
  -- 37, 2012.

\bibitem{book_DM2015}
K.~D\k{e}bicki and M.~Mandjes.
\newblock {\em Queues and L{\'e}vy fluctuation theory}.
\newblock Springer, 2015.

\bibitem{FD2015}
V.~A. Fajardo and S.~Drekic.
\newblock Waiting time distributions in the preemptive accumulating priority
  queue.
\newblock {\em Methodology and Computing in Applied Probability}, pages 1--30,
  2015.

\bibitem{G1977}
H.~M. Goldberg.
\newblock Analysis of the earliest due date scheduling rule in queueing
  systems.
\newblock {\em Mathematics of Operations Research}, 2(2):145--154, 1977.

\bibitem{HR2016}
M.~Haviv and L.~Ravner.
\newblock Strategic bidding in an accumulating priority queue: equilibrium
  analysis.
\newblock {\em Annals of Operations Research}, 244(2):505--523, 2016.

\bibitem{K1998}
O.~Kella.
\newblock {An exhaustive L{\'e}vy storage process with intermittent output}.
\newblock {\em Stochastic Models}, 14(4):979--992, 1998.

\bibitem{K2012}
O.~Kella.
\newblock {The class of distributions associated with the generalized
  Pollaczek-Khinchine formula}.
\newblock {\em Journal of Applied Probability}, 49(3):883--887, 09 2012.

\bibitem{KB2013}
O.~Kella and O.~Boxma.
\newblock {Useful martingales for stochastic storage processes with
  L{\'e}vy-type input}.
\newblock {\em Journal of Applied Probability}, 50(2):439--449, 2013.

\bibitem{KY2013}
O.~Kella and M.~Yor.
\newblock {Unifying the Dynkin and Lebesgue-Stieltjes formulae}.
\newblock {\em Journal of Applied Probability}, 54, 2017 In Press.

\bibitem{K1964}
L.~Kleinrock.
\newblock A delay dependent queue discipline.
\newblock {\em Naval Research Logistics Quarterly}, 11(3-4):329--341, 1964.

\bibitem{book_K1976}
L.~Kleinrock.
\newblock {\em {Queueing Systems}}, volume II: Computer Applications.
\newblock Wiley, 1976.

\bibitem{KF1967}
L.~Kleinrock and R.~P. Finkelstein.
\newblock Time dependent priority queues.
\newblock {\em Operations Research}, 15(1):104--116, 1967.

\bibitem{book_K2006}
A.~Kyprianou.
\newblock {\em Introductory lectures on fluctuations of L{\'e}vy processes with
  applications}.
\newblock Springer, 2006.

\bibitem{L2015}
N.~Li.
\newblock Recent advances in accumulating priority queues.
\newblock {\em Electronic Thesis and Dissertation Repository, Paper 3401},
  2015.

\bibitem{LS2016}
N.~Li and D.~A. Stanford.
\newblock Multi-server accumulating priority queues with heterogeneous servers.
\newblock {\em European Journal of Operational Research}, 252(3):866 -- 878,
  2016.

\bibitem{NA1979}
A.~Netterman and I.~Adiri.
\newblock A dynamic priority queue with general concave priority functions.
\newblock {\em Operations Research}, 27(6):1088--1100, 1979.

\bibitem{SB2013}
V.~Sarhangian and B.~Balcıo{\u{g}}lu.
\newblock A first passage time problem for spectrally positive {L\'{e}}vy
  processes and its application to a dynamic priority queue.
\newblock {\em Operations Research Letters}, 41(6):659 -- 663, 2013.

\bibitem{SSTZ2014}
A.~B. Sharif, D.~A. Stanford, P.~Taylor, and I.~Ziedins.
\newblock A multi-class multi-server accumulating priority queue with
  application to health care.
\newblock {\em Operations Research for Health Care}, 3(2):73--79, 2014.

\bibitem{STZ2014}
D.~A. Stanford, P.~Taylor, and I.~Ziedins.
\newblock Waiting time distributions in the accumulating priority queue.
\newblock {\em Queueing Systems}, 77(3):297--330, 2014.

\end{thebibliography}

\end{document}